\newtheorem{theorem}{Theorem}
\newtheorem{proposition}[theorem]{Proposition}
\newtheorem*{problem}{Problem}
\theoremstyle{remark}
\numberwithin{theorem}{section} \numberwithin{equation}{section}
\newcommand{\FF}{\mathbb{F}_p}
\newcommand{\Q}{\mathbb{Q}}
\newcommand{\Z}{\mathbb{Z}}
\newcommand{\ve}{\varepsilon}
\DeclareFontFamily{U}{wncy}{}
\DeclareFontShape{U}{wncy}{m}{n}{<->wncyr10}{}
\DeclareSymbolFont{mcy}{U}{wncy}{m}{n}
\DeclareMathSymbol{\Sh}{\mathord}{mcy}{"58}
\newtheorem*{theorem*}{Theorem}
\newcommand{\bigOe}[1]{O_\varepsilon\left(#1\right)}
\title{Murmurations of Mestre-Nagao sums}
\author{Zvonimir Bujanovi\'c}
\address{Department of Mathematics\\ 
	University of Zagreb\\
	Bijeni\v{c}ka cesta 30\\
	10000 Zagreb\\
	Croatia}
\email{zvonimir.bujanovic@math.hr}
\author{Matija Kazalicki}
\address{Department of Mathematics\\ 
	University of Zagreb\\
	Bijeni\v{c}ka cesta 30\\
	10000 Zagreb\\
	Croatia}
\email{matija.kazalicki@math.hr}
\author{Lukas Novak}
\address{Department of Mathematics\\ 
	University of Zagreb\\
	Bijeni\v{c}ka cesta 30\\
	10000 Zagreb\\
	Croatia}
\email{lukas.novak@math.hr}
\begin{document}

\maketitle

\section{Abstract}

This paper investigates the detection of the rank of elliptic curves with ranks 0 and 1, employing a heuristic known as the Mestre-Nagao sum

\[
S(B) = \frac{1}{\log{B}} \sum_{\substack{p<B \\ \textrm{good reduction}}} \frac{a_p(E)\log{p}}{p},
\]

\noindent where $a_p(E)$ is defined as $p + 1 - \#E(\mathbb{F}_p)$ for an elliptic curve $E/\mathbb{Q}$ with good reduction at prime $p$. This approach is inspired by the Birch and Swinnerton-Dyer conjecture.

Our observations reveal an oscillatory behavior in the sums, closely associated with the recently discovered phenomena of murmurations of elliptic curves \cite{Hee_Lee_Oliver_Arithmetic_Pozdnyakov}. Surprisingly, this suggests that in some cases, opting for a smaller value of $B$ yields a more accurate classification than choosing a larger one. For instance, when considering elliptic curves with conductors within the range of $[40\,000,45\,000]$, the rank classification based on $a_p$'s with $p < B = 3\,200$ produces better results compared to using $B = 50\,000$. This phenomenon finds partial explanation in the recent work of Zubrilina \cite{Zubrilina}.

\section{Introduction}

Let $E$ be an elliptic curve over $\Q$ with conductor $N$. Mordell's theorem states that the group of rational points $E(\Q)$ is a finitely generated abelian group, $E(\Q)_{\mathrm{tors}}\times \Z^r$, where $E(\Q)_{\mathrm{tors}}$ is the torsion subgroup and $r$ is its (algebraic) rank. While the torsion subgroups have been well understood following Mazur's work, determining the rank remains an enigma. Its possible values are unknown, a question initially posed by Poincaré  \cite{Poincare}. Moreover, there's no consensus on whether the rank is unbounded. Traditionally, it was thought to be unbounded until studies by Watkins and Park et al. \cite{Watkins14,Watkins15,Park_et_all} proposed, using heuristic models, that only a finite number of curves has a rank exceeding $21$. Elkies holds the current record with a rank of $28$.

Finding high-rank curves poses challenges, partly due to the computational complexity of determining an elliptic curve's rank. No universally applicable algorithm exists for this task, primarily because finding rational points on elliptic curves is complex. Descent algorithms, commonly used, often reduce to a basic point search on auxiliary curves. To tackle this, researchers employ rank heuristics inspired by the Birch and Swinnerton-Dyer conjecture. These heuristics help identify potential high-rank elliptic curves, easing the computational burden.

For each prime of good reduction $p$, we define $a_p=p+1-\#E(\FF)$. For $p|N$, we set $a_p=0,-1,$ or $1$  if, respectively,  $E$ has additive, split multiplicative or non-split multiplicative reduction at $p$. The $L$-function attached to $E/\Q$ is then defined as an Euler product
$$L_E(s)=\prod_{p|\Delta}\left(1-\frac{a_p}{p^s} \right)^{-1}\prod_{p \nmid \Delta}\left(1-\frac{a_p}{p^s}+\frac{p}{p^{2s}} \right)^{-1},$$
which converges absolutely for $\Re(s) > 3/2$ and extends to an entire function by the Modularity theorem \cite{Wiles,Breuil_Conrad_Diamond_Taylor}. The Birch and Swinnerton-Dyer (BSD) conjecture states that the order of vanishing of $L_E(s)$ at $s=1$ (the quantity known as analytic rank) is equal to the rank of $E(\Q)$.

Mestre \cite{Mestre82} and Nagao \cite{Nagao92}, and later others  \cite{Elkies_new_rank_records,Bober}, motivated by BSD conjecture, introduced certain sums (see Section 2 in \cite{Kazalicki_Vlah} for one list of sums) which are aimed at detecting curves with high analytic rank. In an abuse of terminology, we refer to all such sums as the Mestre-Nagao sums. 

In this paper we will study the following sum

\begin{align*}
	S(B)&=\frac{1}{\log{B}}\sum_{\substack{p<B, \\ \textrm{ good  reduction}}} \frac{a_p(E)\log{p}}{p}.\\
\end{align*}

The sum was thoroughly analyzed in \cite{Kim_Murty}, demonstrating that if the limit $\lim_{B\rightarrow \infty} S(B)$ exists, it converges to $-r_{an}+1/2$, where $r_{an}$ represents the analytic rank of $E/\Q$.

The classification of the rank of elliptic curves based on $a_p$ coefficients and conductor was explored in \cite{Kazalicki_Vlah} (see also \cite{Hee_Lee_Oliver_Arithmetic}), wherein the authors trained a deep convolutional neural network (CNN) for this purpose. As a benchmark for the CNN's performance, they also trained simple fully connected neural networks using the value of one of the six Mestre-Nagao sums (one of which was $S(B)$) at fixed $B=1\,000, 10\,000$ or $100\,000$ along with the conductor. Remarkably, the models encountered the most difficulty when classifying curves of rank zero and one, although this task is easily distinguishable for humans due to the Parity conjecture.

Therefore, our focus in this paper lies on the following classification problem.

\begin{problem} 
For a fixed $B > 0$, consider an elliptic curve $E/\mathbb{Q}$ with conductor $N$ and rank equal to $0$ or $1$. Our objective is to estimate the rank of $E$ based on the values of $S(B)$ and $N$.
\end{problem}

Essentially, given a specific $B$ and a conductor $N$ (or in practice, a conductor range such as $[N,N+10\sqrt{N}]$), the task is to identify the optimal cutoff value $C(N)$. This value distinguishes between curves with $S(B) > C(N)$, classified as rank $0$, and those with $S(B) \leq C(N)$, classified as rank $1$. Given the conjectural convergence of $S(B)$, one would anticipate that increasing $B$ would consistently enhance the classification quality. After all, having more coefficients $a_p$ should lead to a better approximation of the $L$-function. 

Surprisingly, this assumption appears not to hold true. We observe this in the Figure \ref{fig:rank01}, which shows the averages of $S(B)$ (along with $90\%$ confidence intervals) separately for curves of rank $0$ and $1$ in the conductor range $[40\,000,45\,000]$. In all our experiments, we are using Balakrishnan et al. \cite{BalakrishnanDatabase} database  of elliptic curves.

\begin{figure}
	\centering
	\resizebox{0.9\textwidth}{!}{\includegraphics{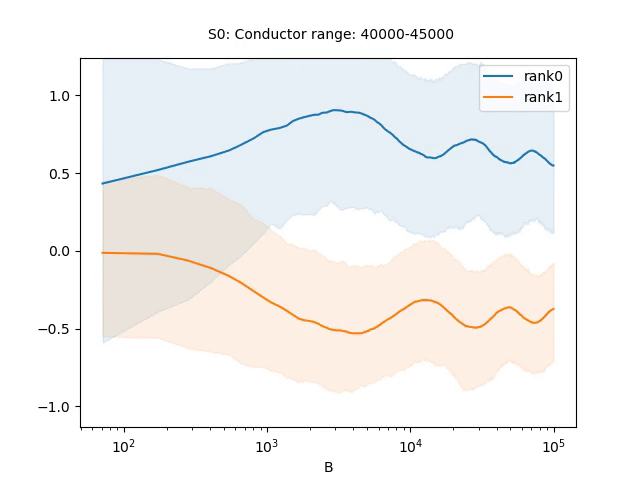}}
	\caption{Average values of $S(B)$ and their corresponding $90\%$ confidence intervals are computed for $1026$ curves of rank $0$ and $1485$ curves of $1$, within the conductor range $[40\,000, 45\,000]$.}
	\label{fig:rank01}
\end{figure}

The figure suggests that we can expect better classification quality if we choose $B$ corresponding to the first local maximum, which occurs at around $B = 0.08 \times 40\,000 = 3\,200$, rather than opting for a much larger $B$ value such as $B = 50\,000$.
Indeed, for the first choice of $B$, the optimal cutoff is $C =0.1368$, with which we can correctly classify $98.73\%$ of curves in the given conductor range, while for the second choice of $B$, the optimal cutoff is $C = 0.0694$, resulting in $97.85\%$ correct classification!

Interestingly, the locations of the first few local maxima can be fairly precisely predicted. In the conductor range $[N,N+10\sqrt{N}]$, the first local maximum is at around $B = 0.08N$, the second one is at $B = 0.65N$, and the third one is at $B = 1.7N$. For an illustrative demonstration of this phenomena, please refer to the animation \cite{Bujanovic_Kazalicki_Animation} that presents the supplementary material for this manuscript.

We can partially explain these findings by linking it to the recent work on the murmurations of elliptic curves. 

In their study, He, Lee, Oliver, and Pozdnyakov \cite{Hee_Lee_Oliver_Arithmetic_Pozdnyakov} discovered a striking oscillation pattern in the averages of the $a_p$ coefficients of elliptic curves with fixed rank (zero or one) and conductor within a specified range. See Figure \ref{fig:murmurations_curves} taken from their paper for an illustration.

\begin{figure}
	\centering
	\resizebox{0.6\textwidth}{!}{\includegraphics{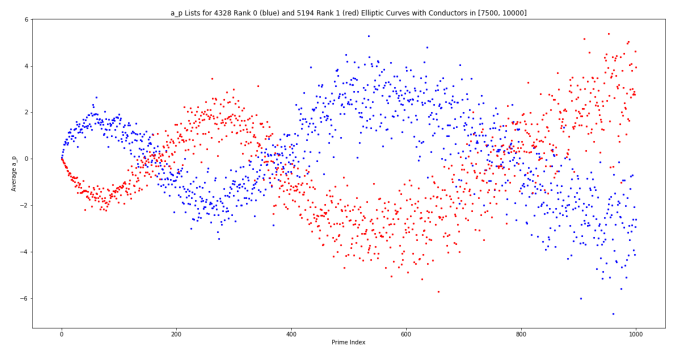}}
	\caption{Distribution of averages of $a_p$'s of elliptic curves of rank $0$ (blue) and $1$ (red) with conductors in $[7\,500,10\,000]$. The figure is sourced from \cite{Hee_Lee_Oliver_Arithmetic_Pozdnyakov}.}
	\label{fig:murmurations_curves}
\end{figure}

Subsequently, Sutherland observed similar phenomena in more general families of $L$-functions, including cusp forms with fixed root numbers. Zubrilina  \cite{Zubrilina} provided an explanation for these phenomena within the context of families of cusp forms. 

By applying the Eichler-Selberg formula to the composition of Hecke and Atkin-Lehner operators, Zubrilina derived an asymptotic formula for the average of $a_p(f) \epsilon(f)$, where $f$ ranges over newforms in the spaces of cusp forms $S_2(\Gamma_0(N))$ for $N$ squarefree in the interval $[X,X+Y]$. Here, $\epsilon(f)$ represents the root number, and $a_p(f)$ denotes the $p$-th Fourier coefficient of $f$. Please refer to Theorem 1 in \cite{Zubrilina} for the precise formulation of this result.

The main term of the average is a function of $y=\frac{p}{X}$ and is equal to
$$M(y)=C_1\sqrt{y}+C_2\sum_{1\le r\le 2\sqrt{y}} c(r)\sqrt{4y-r^2}-C_3 y,$$
where $C_i$ are explicit positive constants and $c(r)$ denotes an explicit positive function (see Section \ref{sec:max}), while the error term depends on $Y$.

We can use the formula for $M(y)$ as a heuristics, disregarding all the error terms, to approximate the averages of $S(B)$ over the families of rank $0$ and $1$, or more precisely, to approximate the average of $\frac{1}{\log{B}}\sum_{p<B} \frac{a_p(E)\epsilon(E)\log{p}}{p}$ for $E$ in a conductor range $[X,X+Y]$ with the expression $\frac{1}{\log{B}}\sum_{p<B} \frac{M(p/X)\log{p}}{p}$. For a fixed $X=N$, by setting $B=x N$, we can define the function 
$$f(x)=\frac{1}{\log{x N}}\sum_{p<x N} \frac{M(p/N)\log{p}}{p}.$$
Figure \ref{fig:f(x)} shows the graph of $f(x)$ for $N=100\,000$.

\begin{figure}
	\centering
	\resizebox{0.6\textwidth}{!}{\includegraphics{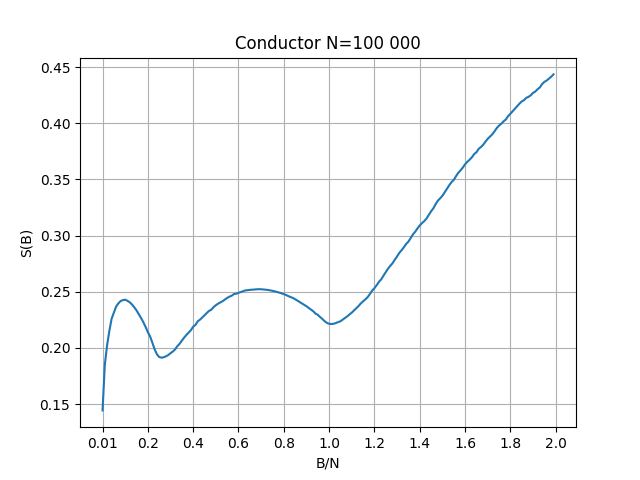}}
	\caption{Graph of $f(x)$ for $N=100,000$.}
	\label{fig:f(x)}
\end{figure}

The first two local maxima, located approximately at $0.11$ and $0.71$ respectively, closely correspond to our earlier observations from the data, which were approximately $0.08$ and $0.65$ respectively. However, the anticipated third maximum at $1.7$ is not discernible in the graph, likely due to the neglected error terms. For a detailed analysis of function $f(x)$, please refer to Section \ref{sec:max}. In particular, Proposition \ref{thm: loc_max_bound} implies that as $N$ tends to infinity, the first maximum of $f(x)$ converges to $\frac{C_1^2}{C_3^2} \approx 0.14261$, while the second converges to $\lambda\approx 0.75085$.

\section[Local maxima of $\textnormal{$f$}(x)$]{Local maxima of $\textnormal{$f$}(x)$} \label{sec:max}

In this section we will approximate the first two points of local maxima of the function
\[
f(x)=\frac{1}{\log{xN}}\sum_{p<xN}\frac{M(p/N)\log{p}}{p}.
\]
where $M_2(y)$ is the weight $2$ murmuration density defined as
\[
M_2(y)=C_1\sqrt{y}+C_2\sum_{1\leq r \leq 2\sqrt{y}}c(r)\sqrt{4y-r^2}-C_3y,
\]
where $C_1=D_2A$, $C_2=D_2B$, $C_3=D_2\pi$ are constants and
\begin{align*}
	c(r)&=\prod_{p\mid r}\left(1+\frac{p^2}{p^4-2p^2-p+1}\right), \\
	A&=\prod_{p}\left(1+\frac{p}{(p+1)^2(p-1)}\right), \\
	B&=\prod_{p} \frac{p^4-2p^2-p+1}{(p^2-1)^2},\\
	D_2&=\frac{12}{\pi \prod_{p}\left(1-\frac{1}{p^2+p}\right)}.
\end{align*}
\subsection[Estimations for  $\textnormal{$f$}(x)$]{Estimations for $\textnormal{$f$}(x)$}	

We start by observing that the sum $\sum_{1\leq r \leq 2\sqrt{p/N}} c(r)\sqrt{4p/N-r^2}$ in the function $M(p/N)$ vanishes for $0<x<1/4$ (i.e.\ for primes $p<N/4$) and for $1/4\leq x <1$ (i.e.\ for primes $N/4\leq p <N$) it only evaluates in $r=1$. Thus, we will observe the function $f(x)$ on the intervals $\left\langle0, \frac{1}{4}\right\rangle$ and $\left [\frac{1}{4}, 1 \right\rangle$.

Assume that $0<x<1/4$. By plugging in the formula for $M(p/N)$ we have that
\[
f(x)=\frac{1}{\log{xN}}\sum_{p\leq xN}\left(C \frac{\log{p}}{\sqrt{p}}-D\log{p}\right).
\]
where $C=\frac{C_1}{\sqrt{N}}$ and $D=\frac{C_3}{N}$.

Observe that $\sum_{p\leq xN} \log{p} = \vartheta(xN)$ where $\vartheta(x)$ is the first Chebyshev function.

Using Abel's summation formula we get
\[
\sum_{p\leq xN} \frac{\log{p}}{\sqrt{p}} = \frac{\vartheta(xN)}{\sqrt{xN}} + \int_{2}^{xN}\frac{\vartheta(t)}{2t^{\frac{3}{2}}}\, dt.
\]

By assuming the Riemann hypothesis (RH), we have that $\vartheta(x)=x + \bigOe{x^{\frac{1}{2}+\ve}}$. Thus, we get the following estimates:    
\begin{align*}
	\sum_{p\leq xN}\log{p} &=xN+\bigOe{(xN)^{\frac{1}{2}+\ve}},\\   
	\sum_{p\leq xN} \frac{\log{p}}{\sqrt{p}} 
	&= \sqrt{xN}+\bigOe{(xN)^\ve} + \int_{2}^{xN}\frac{1}{2\sqrt{t}}\, dt + \bigOe{\int_{2}^{xN}\frac{1}{2t^{1-\ve}}\, dt} \\
	&=2\sqrt{xN}-\sqrt{2}+\bigOe{(xN)^\ve}.
\end{align*}

From here we finally have 
\begin{align*}
	f(x)
	&=\frac{1}{\log{xN}}\left(2C_1\sqrt{x}-C_3x-\frac{C_1\sqrt{2}}{\sqrt{N}}\right) + \bigOe{x^\ve N^{\ve-\frac{1}{2}}}+\bigOe{x^{\frac{1}{2}+\ve}N^{\ve-\frac{1}{2}}} \\
	&=\frac{1}{\log{xN}}\left(2C_1\sqrt{x}-C_3x-\frac{C_1\sqrt{2}}{\sqrt{N}}\right) + \bigOe{x^\ve N^{\ve-\frac{1}{2}}}.
\end{align*}
Note that in the last equation we used $x^{\frac{1}{2}+\ve} N^{\ve-\frac{1}{2}}\leq x^\ve N^{\ve-\frac{1}{2}}$ since $x<1$.

For $1/4\leq x <1$ we have that 
\[
f(x) = \frac{1}{\log{xN}}\sum_{p \leq xN}\left(C\frac{\log{p}}{\sqrt{p}}-D\log{p} \right) + \frac{E}{\log{xN}}\sum_{N/4<p\leq xN}\frac{\sqrt{4p-N}}{p}\log{p}.
\]
where $C=\frac{C_1}{\sqrt{N}}$, $D=\frac{C_3}{N}$ and $E=\frac{C_2}{\sqrt{N}}$.

Again by using Abel's summation formula and $\vartheta(x)=x+\bigOe{x^{\frac{1}{2}+\ve}}$ we get
\begin{align*}   
	&\sum_{N/4<p\leq xN}\frac{\sqrt{4p-N}}{p}\log{p} = \frac{\sqrt{4xN-N}}{xN}\vartheta(xN) - \int_{N/4}^{xN}\vartheta(t)\frac{N-2t}{t^2\sqrt{4t-N}}\, dt\\
	&= \sqrt{N(4x-1)}+\bigOe{(xN)^\ve}-\int_{N/4}^{xN}\frac{N-2t}{t\sqrt{4t-N}}\, dt + \bigOe{\int_{N/4}^{xN}\frac{N-2t}{t^{3/2-\ve}\sqrt{4t-N}}}\, dt \\
	&=2\sqrt{N(4x-1)} - 2\sqrt{N}\arctan{(\sqrt{4x-1})} + \bigOe{(xN)^\ve}.
\end{align*}

Finally, by plugging this back into the above equation for $f(x)$ and using the previous estimates for the sums $\sum_{p\leq xN} \log{p}$ and $\sum_{p\leq xN} \frac{\log{p}}{\sqrt{p}}$, we obtain
\begin{align*}
	f(x) 
	&= \frac{1}{\log{xN}}\left(2C_1\sqrt{x}+2C_2\sqrt{4x-1}-2C_2\arctan{(\sqrt{4x-1})}-C_3x-\frac{C_1\sqrt{2}}{\sqrt{N}}\right) \\
	&+ \bigOe{x^\ve N^{\ve-\frac{1}{2}}}.
\end{align*}

We summarize this results in the next proposition.
\begin{proposition}\label{thm: f(x)_est}
	With the notation as above, the following holds (under RH):
	\begin{enumerate}
		\item[i)] If $0<x<1/4$, then
		\[
		f(x)=\frac{1}{\log{xN}}\left(2C_1\sqrt{x}-C_3x-\frac{C_1\sqrt{2}}{\sqrt{N}}\right) + \bigOe{x^\ve N^{\ve-\frac{1}{2}}}.
		\]
		\item[ii)] If $1/4\leq x<1$, then
		\begin{align*}
			f(x) 
			&= \frac{1}{\log{xN}}\left(2C_1\sqrt{x}+2C_2\sqrt{4x-1}-2C_2\arctan{(\sqrt{4x-1})}-C_3x-\frac{C_1\sqrt{2}}{\sqrt{N}}\right)\\
			&+ \bigOe{x^\ve N^{\ve-\frac{1}{2}}}.
		\end{align*}
	\end{enumerate}
\end{proposition}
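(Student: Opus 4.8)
This proposition collects the estimates derived in the computations above, so the plan is to assemble those into a single clean argument. I would first isolate the two analytic inputs used throughout: the identity $\sum_{p\le xN}\log p=\vartheta(xN)$ for Chebyshev's function, and the conditional estimate $\vartheta(t)=t+\bigOe{t^{1/2+\ve}}$ valid under RH. I would then record the combinatorial fact driving the case split: in $M(p/N)$ the quantity $\sqrt{4p/N-r^2}$ is present only when $r\le 2\sqrt{p/N}$, so for $p<N/4$ (that is, $x<1/4$) the inner sum over $r$ is empty, while for $N/4\le p<N$ (that is, $1/4\le x<1$) only the term $r=1$ survives. This is exactly the dichotomy between parts i) and ii).

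For part i), substituting $M(p/N)=C_1\sqrt{p/N}-C_3\,p/N$ into the definition of $f$ gives $f(x)=\frac{1}{\log xN}\sum_{p\le xN}\bigl(C\,\tfrac{\log p}{\sqrt p}-D\log p\bigr)$ with $C=C_1/\sqrt N$ and $D=C_3/N$. The term $\sum_{p\le xN}\log p=\vartheta(xN)$ is handled directly by the RH estimate. For $\sum_{p\le xN}\tfrac{\log p}{\sqrt p}$ I would apply Abel summation with the weight $t^{-1/2}$ to obtain $\tfrac{\vartheta(xN)}{\sqrt{xN}}+\int_2^{xN}\tfrac{\vartheta(t)}{2t^{3/2}}\,dt$, insert $\vartheta(t)=t+\bigOe{t^{1/2+\ve}}$, and use $\int_2^{xN}\tfrac{dt}{2\sqrt t}=\sqrt{xN}-\sqrt2$ to arrive at $2\sqrt{xN}-\sqrt2+\bigOe{(xN)^\ve}$. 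Assembling the two pieces, dividing by $\log xN$, and using $x<1$ to absorb $x^{1/2+\ve}$ into $x^\ve$ in the error term yields the claimed formula.

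For part ii), the $C$- and $D$-contributions reproduce part i) verbatim, and it remains to handle the extra term $\tfrac{E}{\log xN}\sum_{N/4<p\le xN}\tfrac{\sqrt{4p-N}}{p}\log p$ with $E=C_2/\sqrt N$. Abel summation with the weight $g(t)=\sqrt{4t-N}/t$ writes this sum as $\tfrac{\sqrt{4xN-N}}{xN}\vartheta(xN)-\int_{N/4}^{xN}\vartheta(t)\tfrac{N-2t}{t^2\sqrt{4t-N}}\,dt$; inserting $\vartheta(t)=t+\bigOe{t^{1/2+\ve}}$ leaves a boundary contribution $\sqrt{N(4x-1)}$ together with the elementary integral $-\int_{N/4}^{xN}\tfrac{N-2t}{t\sqrt{4t-N}}\,dt$. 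The substitution $u=\sqrt{4t-N}$ turns this integrand into a rational function of $u$ (which simplifies to $-1+2N/(u^2+N)$), whose antiderivative involves $\arctan(u/\sqrt N)$; evaluating between $u=0$ and $u=\sqrt{N(4x-1)}$ and combining with the boundary term gives $2\sqrt{N(4x-1)}-2\sqrt N\arctan(\sqrt{4x-1})+\bigOe{(xN)^\ve}$ for the whole sum, so that multiplying by $E/\log xN$ and adding the part-i) expression produces the formula in ii).

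The conceptual content is light; the point requiring real care is the bookkeeping of the error terms. One has to check that the boundary contribution $\tfrac{\sqrt{4xN-N}}{xN}\cdot\bigOe{(xN)^{1/2+\ve}}$ and each error integral $\bigOe{\int\cdots}$ collapse, after division by $\log xN$, to something dominated by $\bigOe{x^\ve N^{\ve-1/2}}$ uniformly in $x$ over the relevant interval — in particular that no factor such as $1/\sqrt x$ or $1/\sqrt{4x-1}$ blows up near an endpoint — and that the elementary antiderivatives, especially the one producing the $\arctan$, are computed with the correct signs and constants. Neither step is hard, but both are where a careless write-up would go wrong.
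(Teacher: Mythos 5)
Your proposal is correct and follows essentially the same route as the paper: the same case split on whether the $r$-sum in $M(p/N)$ is empty or reduces to $r=1$, the same Abel summation against $\vartheta$, the same RH input, and the same elementary integral producing the $\arctan$ term. No substantive difference to report.
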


\subsection{Estimations of the local maxima}

By disregarding the error terms and calculating the local maxima of the main terms obtain for $f(x)$ in Proposition~\ref{thm: f(x)_est} we get good estimates for the first and the second local maxima of $f(x)$. This estimates are shown in the Table~\ref{tab:loc_max} below.

\begin{table}[htbp]
	\centering
	\caption{Estimates for the local maxima of $f(x)$}
	\label{tab:loc_max}
	\begin{tabular}{ c D{.}{.}{5.5} D{.}{.}{5.5} } 
		\toprule
		{$N$} & \multicolumn{1}{c}{First local maxima} & \multicolumn{1}{c}{Second local maxima} \\ 
		\midrule
		{$10^4$} & 0.10023 & 0.69381 \\ 
		{$10^5$} & 0.11077 & 0.70510 \\ 
		{$10^6$} & 0.11724 & 0.71294 \\ 
		{$10^7$} & 0.12156 & 0.71856 \\ 
		{$10^8$} & 0.12334 & 0.72276 \\ 
		\bottomrule
	\end{tabular}
\end{table}

Although we are not able to give explicit formulas for the local maxima of the main terms, we have an upper bound for them and we can describe their limit as $N \to \infty$.

\begin{proposition}\label{thm: loc_max_bound}
Let $x_1(N)$ denote the local maximum of the main term in Proposition~\ref{thm: f(x)_est} i), and let $x_2(N)$ denote the local maximum of the main term in Proposition~\ref{thm: f(x)_est} ii). Additionally, let $m_1(N)$ and $m_2(N)$ represent the first and second local maxima of $f(x)$, respectively. With the above notation, the following statement holds
	\begin{enumerate}
		\item[i)] 
		$m_1(N)\leq \dfrac{A^2}{\pi^2}=0.14261\dotso$.
		
		\item[ii)]
		$m_2(N) \leq \dfrac{A^2+4B^2+\sqrt{(A^2+4B^2)^2-2\pi^2B^2}}{\pi^2}=0.76881\dotso$.
		
		\item[iii)]
		$\lim_{N \to \infty} m_1(N)=\lim_{N \to \infty} x_1(N)=\dfrac{A^2}{\pi^2}$.
		
		\item[iv)]
		$\lim_{N \to \infty} m_2(N)=\lim_{N \to \infty} x_2(N)=\lambda$ where $\lambda \geq 1/4$ satisfies the equation
		\[
		A\sqrt{(4\lambda-1)\lambda}+4B\lambda-\pi\lambda\sqrt{4\lambda-1}=B.    
		\]
	\end{enumerate}
\end{proposition}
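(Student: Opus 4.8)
The plan is to analyze the two main-term functions separately on their respective intervals, using calculus to locate critical points and then carefully controlling the effect of the $1/\log(xN)$ prefactor as $N\to\infty$.

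\medskip

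\textbf{Step 1: Reduce to the numerator.} For part i), write the main term as $g_N(x) = h_1(x)/\log(xN)$ where $h_1(x) = 2C_1\sqrt{x} - C_3 x - C_1\sqrt{2}/\sqrt{N}$. Differentiating, $g_N'(x) = 0$ is equivalent to $h_1'(x)\log(xN) = h_1(x)/x$, i.e. $(C_1/\sqrt{x} - C_3)\log(xN) = (2C_1\sqrt{x} - C_3 x - C_1\sqrt2/\sqrt N)/x$. Since $\log(xN) \to \infty$ while the right side stays bounded on $(0,1/4)$, any sequence of local maxima $x_1(N)$ must satisfy $h_1'(x_1(N)) \to 0$, which forces $\sqrt{x_1(N)} \to C_1/C_3 = A/\pi$ (using $C_1 = D_2 A$, $C_3 = D_2\pi$), hence $x_1(N) \to A^2/\pi^2$. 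This proves iii) once I also check that $m_1(N) = g_N(x_1(N))$: I should verify $A^2/\pi^2 < 1/4$ (true since $A/\pi \approx 0.378$) so the maximum indeed lies in the stated interval, and that $g_N$ really attains a local max there (concavity of $h_1$ plus the sign of $h_1$).

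\medskip

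\textbf{Step 2: The upper bounds i) and ii).} For i), I want $m_1(N) = h_1(x_1(N))/\log(x_1(N)N) \leq A^2/\pi^2$ for all $N$. The cleanest route: show $h_1(x) \leq (A^2/\pi^2)\log(xN)$ for all relevant $x$, or more simply bound $g_N(x) \leq \max_x h_1(x)/\log(\cdot)$ — but the prefactor complicates monotonicity. A better approach is to note that dropping the negative constant $-C_1\sqrt2/\sqrt N$ only increases the numerator, and then maximize $\tilde h_1(x) = 2C_1\sqrt x - C_3 x$ which peaks at $x = C_1^2/C_3^2 = A^2/\pi^2$ with value $C_1^2/C_3 = D_2 A^2/\pi$; meanwhile $\log(xN) \geq$ something — actually the hoped-for bound $m_1(N) \le A^2/\pi^2$ suggests comparing $\tilde h_1(x)/\log(xN)$ directly. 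I expect the inequality to come from $\tilde h_1(x) = C_3 x (2\sqrt{C_1^2/(C_3^2 x)} \cdot \sqrt{x} \cdot (1/x) \cdots)$ — concretely, AM-GM gives $2C_1\sqrt x \le C_3 x + C_1^2/C_3$, so $\tilde h_1(x) \le C_1^2/C_3$, and then one needs $\log(xN) \ge C_1^2/C_3 \cdot \pi^2/(A^2 D_2) = $ ... I will need to pin down why $\log(xN)$ is large enough; this likely uses that the curves/conductors of interest have $N$ bounded below, or that the bound is stated as a supremum over the regime where $f$ is being used. For ii), the same strategy applies to $h_2(x) = 2C_1\sqrt x + 2C_2\sqrt{4x-1} - 2C_2\arctan\sqrt{4x-1} - C_3 x - C_1\sqrt2/\sqrt N$ on $[1/4,1)$; the numerator bound will be a maximization in $x$ producing the quadratic whose root gives $(A^2 + 4B^2 + \sqrt{(A^2+4B^2)^2 - 2\pi^2 B^2})/\pi^2$.

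\medskip

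\textbf{Step 3: The limiting equation iv).} As in Step 1, on $[1/4,1)$ the critical-point equation for $h_2(x)/\log(xN)$ forces $h_2'(x_2(N)) \to 0$. Compute $h_2'(x) = C_1/\sqrt x + 4C_2/\sqrt{4x-1} - 2C_2 \cdot \frac{2/\sqrt{4x-1}}{1+(4x-1)} - C_3 = C_1/\sqrt x + 4C_2/\sqrt{4x-1} - C_2/( x\sqrt{4x-1})\cdot(\text{after simplifying } \arctan' ) - C_3$; setting this to zero, multiplying through by $x\sqrt{4x-1}$, and substituting $C_1 = D_2 A$, $C_2 = D_2 B$, $C_3 = D_2\pi$ (the $D_2$ cancels) should yield exactly $A\sqrt{(4\lambda-1)\lambda} + 4B\lambda - \pi\lambda\sqrt{4\lambda-1} = B$. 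Then I argue this equation has a unique relevant root $\lambda \ge 1/4$ (by monotonicity/sign analysis of the associated function), that $\lambda \in (1/4,1)$ so it is a genuine local max of $f$'s main term, and that $m_2(N) = g_N(x_2(N)) \to \lambda$ by the same $\log(xN)\to\infty$ squeeze that makes $h_2'(x_2(N))\to 0$ — noting that $h_2(\lambda)/\log(\lambda N)\to 0$, so I actually need the limit of $m_2(N)$ itself, meaning the claim $\lim m_2(N) = \lambda$ should presumably read as the limit of $x_2(N)$, the location, not the value; I will match the statement's phrasing carefully.

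\medskip

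\textbf{Main obstacle.} The delicate point is the interplay between the numerator and the $1/\log(xN)$ factor: the location of the critical point of $g_N$ is \emph{not} the critical point of the numerator, so I must show the discrepancy vanishes as $N\to\infty$ and simultaneously control it finitely well enough to get the clean uniform upper bounds in i) and ii). Establishing existence and uniqueness of the local maxima (so that $x_1(N), x_2(N), m_1(N), m_2(N)$ are well-defined) via sign analysis of $g_N'$ on each interval — including ruling out spurious critical points near the endpoints $x=1/4$ and $x\to 0^+$ where $\log(xN)$ or $\sqrt{4x-1}$ behave badly — is the most technical part of the argument.
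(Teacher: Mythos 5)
Your treatment of parts iii) and iv) is essentially the paper's own argument: differentiate the main term, note that the quotient-rule correction carries a factor $1/\log(xN)\to\infty$ in the denominator so the critical point must converge to a zero of the numerator's derivative, and your computation of $h_2'$ does yield exactly $A\sqrt{(4\lambda-1)\lambda}+4B\lambda-\pi\lambda\sqrt{4\lambda-1}=B$ after clearing denominators and cancelling $D_2$. (The paper handles the "bounded away from $0$" issue you flag by simply restricting the analysis to $[\delta,1/4\rangle$ for fixed $\delta>0$, and it deduces $\lim m_i(N)=\lim x_i(N)$ from Proposition~\ref{thm: f(x)_est} because the error term vanishes as $N\to\infty$.)

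The genuine gap is in Step 2. You read $m_1(N)$ and $m_2(N)$ as the \emph{values} of $f$ at its local maxima and set out to prove $h_1(x)/\log(xN)\le A^2/\pi^2$, which is not the assertion being made and is not even true uniformly in $N$ --- as your own admission that you "need to pin down why $\log(xN)$ is large enough" signals. As the table of local maxima and parts iii), iv) make clear, $m_i(N)$ are the \emph{locations} of the local maxima of the actual sum $f$, and i), ii) bound those locations. The argument you are missing is a sign-change one: for $0<y<1/4$ the summand density $M(y)=C_1\sqrt{y}-C_3y$ is positive precisely for $y<C_1^2/C_3^2=A^2/\pi^2$, so once $x>A^2/\pi^2$ every newly included prime contributes a negative term to the sum; the sum is therefore decreasing there, and since the prefactor $1/\log(xN)$ is also decreasing (while the sum is positive), $f$ can have no local maximum past $A^2/\pi^2$. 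For ii) one bounds the sign-change point of $A\sqrt{y}+B\sqrt{4y-1}-\pi y$ via the AM--QM inequality $A\sqrt{y}+B\sqrt{4y-1}\le\sqrt{2(A^2+4B^2)y-2B^2}$, which reduces the question to the quadratic $\pi^2y^2-2(A^2+4B^2)y+2B^2>0$ whose larger root is the stated bound. Your plan of "maximizing the numerator $h_2$" would not produce this quadratic, so parts i) and ii) need to be redone along these lines.
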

\begin{proof}
	\mbox{}\\
	\begin{enumerate} 
		\item[i)]
		The inequality follows directly by observing when the main term of the sum in $f(x)$ (for $0<x<1/4$) changes the sign from plus to minus. This happens exactly at $x=\frac{A^2}{\pi^2}$. Hence, $m_1(N)\leq \frac{A^2}{\pi^2}$.
		
		\item[ii)]
		Similar as in i) we observe where the main term of the sum in $f(x)$ (for ${1/4\leq x<1}$) changes the sign. This yields the inequality 
		\[
		A\sqrt{x}-\pi x + B\sqrt{4x-1}<0.
		\]
		Since $A\sqrt{x}+B\sqrt{4x-1}\leq \sqrt{2(A^2+4B^2)x-2B^2}$ by AM-QM inequality we can instead look at the weaker inequality 
		\[
		\sqrt{2(A^2+4B^2)x-2B^2}-\pi x<0.
		\]
		The above inequality leads to a quadratic inequality with solution 
		\[
		x \in \left\langle\dfrac{A^2+4B^2-\sqrt{(A^2+4B^2)^2-2\pi^2B^2}}{\pi^2} , \dfrac{A^2+4B^2+\sqrt{(A^2+4B^2)^2-2\pi^2B^2}}{\pi^2}\right\rangle.
		\]
		Hence, $m_2(N) \leq \dfrac{A^2+4B^2+\sqrt{(A^2+4B^2)^2-2\pi^2B^2}}{\pi^2}$.
	\end{enumerate}
	
	\item[iii)]
	As $x_1(N)$ is the local maxima of the function $\frac{1}{\log{xN}}\left(2C_1\sqrt{x}-C_3x-\frac{C_1\sqrt{2}}{\sqrt{N}}\right)$, after calculating the derivative we get that $x_1(N)$ satisfies
	\[
	A\sqrt{x_1(N)}-\pi x_1(N)=\frac{2A\sqrt{x_1(N)}-\pi x_1(N)-\frac{A\sqrt{2}}{\sqrt{N}}}{\log{N}+\log{x_1(N)}}.
	\]
	Since we are only interested in the local maxima, instead of analyzing the function $\frac{1}{\log{xN}}\left(2C_1\sqrt{x}-C_3x-\frac{C_1\sqrt{2}}{\sqrt{N}}\right)$ on $\langle 0, 1/4\rangle$ we can analyze it on $[\delta, 1/4\rangle$ for a fixed small number $\delta>0$. This gives us the bounds $\delta\leq x_1(N)<1/4$. By using these bounds in the above equation, it follows that the limit $\lim_{N \to \infty} x_1(N)$ exists. Denote $\alpha=\lim_{N \to \infty} x_1(N)$. 
	
	Finally, by letting $N \to \infty$ in the above equation we get that the right-hand side converges to $0$ and therefore we have $A\sqrt{\alpha}-\pi \alpha=0$.
	From here we have that $\alpha = \frac{A^2}{\pi^2}$ (since $\alpha>0$).
	
	Note that $\lim_{N \to \infty} m_1(N)=\lim_{N \to \infty} x_1(N)$ follows directly from Proposition~\ref{thm: f(x)_est} since the error term goes to $0$.
	
	\item[iv)]
	Similar as in iii) we get that the local maxima $x_2(N)$ satisfies the equation
	\begin{align*}
		&A\sqrt{x_2(N)}+\frac{4Bx_2(N)}{\sqrt{4x_2(N)-1}}-\frac{B}{\sqrt{4x_2(N)-1}} - \pi x_2(N)=\\
		&=\frac{\pi x_2(N)+\frac{A\sqrt{2}}{\sqrt{N}}-2A\sqrt{x_2(N)}-2B\sqrt{4x_2(N)-1}+2B\arctan{\sqrt{4x_2(N)-1}}}{\log{N}+\log{x_2(N)}}.
	\end{align*}
	Since $1/4\leq x_2(N)<1$, from the above equation follows that the limit $\lim_{N \to \infty}x_2(N)$ exists. Denote $\lambda=\lim_{N \to \infty}x_2(N)$.
	
	By proceeding the same way as in iii) we get $\lim_{N \to \infty} m_2(N)=\lim_{N \to \infty} x_2(N)$.
	
	By letting $N \to \infty$ in the above equation we get that the right-hand side converges to $0$ and this implies that $\lambda$ satisfies
	\[
	A\sqrt{(4\lambda-1)\lambda}+4B\lambda-\pi\lambda\sqrt{4\lambda-1}=B.    
	\]
\end{proof}

\section{Future work}

The analysis of the main term of the averages of $a_p(f)\epsilon(f)$ from \cite{Zubrilina} offered a qualitative heuristic explanation for the first two observed local maxima in our data on elliptic curves. It would be interesting to investigate whether the analysis of the error terms could shed light on the presence of the third local maximum. Moreover, exploring this phenomenon in the context of cusp forms could lead to formulating and proving precise theorems.

\section*{Acknowledgments}
This work was supported by the project “Implementation of cutting-edge research and its application as part of the Scientific Center of Excellence for Quantum and Complex Systems, and Representations of Lie Algebras“, PK.1.1.02, European Union, European Regional Development Fund.
The second and third authors were supported by the Croatian Science Foundation under the project no.~IP-2022-10-5008.

\bibliographystyle{alpha}
\bibliography{bibliography}
\end{document}